\definecolor{dark-blue}{rgb}{0,0,0.6}
\definecolor{Purple}{rgb}{0.2,0,0.25}
\newcommand{\bref}[1]{\textbf{\ref{#1}}} %bold font for any cross reference 
\newcommand{\beqref}[1]{\textbf{(\ref{#1})}} %bold font for any equation number
\theoremstyle{theorem}
\newtheorem{theorem}{Theorem}
\newtheorem{lemma}[theorem]{Lemma}
\newtheorem{proposition}[theorem]{Proposition}
\newtheorem{fact}[theorem]{Fact}
\theoremstyle{definition}
\newtheorem{definition}[theorem]{Definition}
\newtheorem{example}[theorem]{Example}
\newcommand{\Int}{\textnormal{Int}}
\newcommand{\Ext}{\textnormal{Ext}}
\newcommand{\R}{\mathbb{R}}
\newcommand{\Q}{\mathbb{Q}}
\newcommand{\Z}{\mathbb{Z}}
\newcommand{\N}{\mathbb{N}}
\newcommand{\Lra}{\Longrightarrow}
\subjclass[2010]{03F99, 26A15, 26A03, 54D05, 90C26, 90C59}
\keywords{Compact space, Extreme Value Theorem, Intermediate Value Theorem, optimal delta, semicontinuous, Uniform Continuity Theorem}
\begin{document}
\date{January 30, 2019}\vspace*{-0.6cm}
\title[Fun with ``Analysis I'']{Fun with ``Analysis I'':  basic theorems in calculus revisited}
\author{Daniel Reem}
\address{Daniel Reem, Department of Mathematics, The Technion - Israel Institute of Technology, 3200003 Haifa, Israel.} 
\email{\bf \small dream@technion.ac.il}
\maketitle
 
\vspace*{-0.6cm}
\begin{abstract}
This note tries to show that a re-examination of a first course in analysis, using the more sophisticated tools and approaches obtained in later stages, can be a real fun for experts, advanced students, etc. We start by going to the extreme, namely we present two proofs of the Extreme Value Theorem: ``the programmer proof'' that suggests a method (which is practical in down-to-earth settings) to approximate, to any required precision, the extreme values of the given function in a metric space setting, and an abstract space proof (``the level-set proof'') for semicontinuous functions defined on compact topological spaces.  Next, in the intermediate part, we consider the Intermediate Value Theorem, generalize it to a wide class of discontinuous functions, and re-examine the meaning of the intermediate value property. The trek reaches the final frontier when we discuss the  Uniform Continuity Theorem, generalize it, re-examine the meaning of uniform continuity, and find the optimal delta of the given epsilon. Have fun!
\end{abstract}\vspace*{0.4cm}

A first course in analysis is not always a pleasant experience for fresh students.  However, once the mathematical foundations become firmer, looking back at this first course and re-examining parts of its  material, using the more sophisticated tools and ways of thinking which have been acquired in later stages, can be a real fun for advanced students, experts (teachers, researchers, enthusiasts, etc.), and many others who like mathematics. The goal of this note is to achieve something in this direction by playing with, and looking for new horizons in, three fundamental theorems in calculus and related material. 

We start the trilogy in Section \bref{sec:ExtremeValue} by going to the extreme. More precisely, we  discuss   the Extreme Value Theorem concerning the extreme (optimal) values of a continuous function defined on a compact space. Two short proofs of this theorem are presented. The first is ``the programmer proof''  for functions defined on a compact metric space. This proof, which is presented in Subsection \bref{subsec:ProgrammerProof}, does not follow the path of most of other proofs which are focused on the abstract existence of the extreme  values, but usually do not present any clue regarding estimating these values. Instead, the programmer proof  suggests a method to approximate, to any required precision, the extreme values of the given function and, as a by-product, proves their existence. The method, which, as implied by its name, is in the spirit of programming, is practical in down-to-earth settings, as explained in Subsection \bref{subsec:DownToEarth}. In Subsection \bref{subsec:LevelSetProof} we return back to the abstract space and present  the ``level-set proof'' for semicontinuous functions defined on a general compact topological space and having values in a fully ordered set. Despite the somewhat abstract setting, this proof seems to be natural and guided directly from the definitions. Both  proofs do not use the frequently used argument of proving first that the supremum and infimum of the function are finite, and then proving that they are attained.  

Next we proceed to the intermediate section (Section \bref{sec:IntermediateValue}) where, naturally, the Intermediate Value Theorem is considered. We generalize this theorem to a class of discontinuous functions 
and re-examine the meaning of the intermediate value property. 

The trek reaches the final frontier in Section \bref{sec:UniformContinuity} with a discussion on uniform continuity. We first consider the question of whether the optimal delta of the given epsilon (from the definition of uniform continuity) can be presented explicitly. A new hope emerges in Subsection \bref{subsec:NewHope} after formulating a quantitative necessary and sufficient condition for a function acting  between two metric spaces to be uniformly continuous. Using this condition, the optimal delta is found and a  few basic properties of it are derived. The compactness strikes back in Subsection \bref{subsec:Compactness} 
when we prove, using the optimal delta, the Uniform Continuity Theorem which says that a continuous function which acts between a compact metric space and a metric space must be uniformly continuous. Actually, we prove a more general result in which various sufficient conditions for the uniform continuity of the given function are formulated, including ones in which the function is not  assumed in advance to be continuous. Finally (Subsection \bref{subsec:Semicontinuity}), 
we discuss the question of whether the optimal delta is a  continuous function of epsilon, and this discussion marks the return of the semicontinuity.  Have fun!

\section{Going to the extreme}\label{sec:ExtremeValue}
In its simplest form, the Extreme Value Theorem, which is sometimes called the Weierstrass Theorem, says that a real continuous function $f$ defined on a closed and bounded interval $I\subseteq \R$ attains extreme (optimal) values on the interval. In other words, there are points $x_{\textnormal{min}}$ (a minimizer) and $x_{\textnormal{max}}$ (a maximizer) in $I$ which  satisfy $f(x_{\textnormal{min}})\leq f(x)\leq f(x_{\textnormal{max}})$ for every $x\in I$. This theorem has been generalized to real continuous functions defined on closed and bounded subsets of finite-dimensional Euclidean spaces, to real continuous functions defined on compact metric spaces, and even to semicontinuous functions defined on compact topological spaces and having values in a linearly ordered set. See, e.g., \cite{Bernau1967jour},\cite[p. 129]{DenceDence2010book}, \cite[p. 18]{DunfordSchwartz1958book},\cite{Ferguson2014jour},\cite[pp. 60-61]{Fitzpatrick2006book},\cite{Fort1951jour},\cite[pp. 193-196]{Hardy1967book},\cite[pp. 283-284]{JacobEvans2016book},\cite{Jungck1963jour}, \cite[pp. 190-191]{Kitchen1968book}, \cite[p. 41]{Kothe1969book},\cite{Martinez-Legaz2014jour},\cite[p. 174]{Munkres2000book},\cite{Pennington1960jour},\cite[p. 89]{Rudin1976book}, \cite{Tandra2015jour} and \cite[pp. 236-237]{Tao2016bookI}. 

In this section we discuss two additional proofs of the Extreme Value Theorem: ``the programmer proof'' 
(Subsections \bref{subsec:ProgrammerProof}--\bref{subsec:DownToEarth} below) and ``the level-set proof''  
(Subsection \bref{subsec:LevelSetProof} below). Before presenting these proofs, we note that for us (here and  elsewhere) any  space that we consider (metric or topological) is nonempty by definition. 

\subsection{\bf Dawn: the programmer proof}\label{subsec:ProgrammerProof} 
The idea behind the proof is simple: we make a discretization (digitization) of the space, i.e.,  we approximate it by a finite set of points (which we interpret as the ``digital world''), with the hope that by a better and better approximation, the extreme values of our function over the digital world will better approximate the supremum and infimum of the function over the entire (``continuous'') space. The existence of an arbitrary good discretization is nothing but a reformulation of the well-known and simple  fact that a compact metric space $(X,d)$ is totally bounded; in other words, $(X,d)$  has an $\epsilon$-net for each $\epsilon>0$, i.e., a nonempty finite set $F_{\epsilon}$ of $X$ with the property that for every $x\in X$ there exists $z\in F_{\epsilon}$ such that $d(x,z)<\epsilon$. See, e.g., \cite[p. 885]{ThomsonBrucknerBruckner2008book} or \cite[pp. 275--276]{Munkres2000book}. 

\begin{theorem}\label{thm:Weierstrass}
Let $(X,d)$ be a compact metric space and let $f:X\to \mathbb{R}$ be  continuous. Then $f$ attains both a minimum and a maximum value on $X$.
\end{theorem}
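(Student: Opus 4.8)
The plan is to give a direct \emph{maximizing-sequence} argument that produces a maximizer without first establishing separately that $f$ is bounded; this is exactly the spirit of the ``programmer's proof''. The minimum will then be obtained for free by applying the result to the continuous function $-f$. First I would set $M:=\sup_{x\in I}f(x)\in(-\infty,+\infty]$, allowing a priori the possibility $M=+\infty$, and then construct a single sequence whose values approach $M$. Concretely, I would choose reals $c_n<M$ with $c_n\to M$ in the extended sense---taking $c_n=n$ when $M=+\infty$ and $c_n=M-1/n$ when $M$ is finite---and, using the definition of the supremum, pick $x_n\in I$ with $f(x_n)>c_n$. By construction $f(x_n)\to M$ regardless of whether $M$ is finite or infinite.

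Next I would invoke compactness. Since $(I,d)$ is a compact metric space it is sequentially compact, so the sequence $(x_n)$ has a subsequence $(x_{n_k})$ converging to some $x^*\in I$. Continuity of $f$ then forces $f(x_{n_k})\to f(x^*)$, and $f(x^*)$ is a finite real number. But $(f(x_{n_k}))_k$ is a subsequence of a sequence converging to $M$, so $f(x_{n_k})\to M$ as well. By uniqueness of limits in $[-\infty,+\infty]$ I conclude $M=f(x^*)$, which shows in one stroke that $M$ is finite \emph{and} that it is attained at $x^*$; hence $x^*$ is a maximizer. Applying the same argument to $-f$ then yields a point at which $f$ attains its minimum, completing the proof.

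The step I expect to be the main (and essentially only) obstacle is the uniform handling of the possibly infinite supremum. The whole point is that boundedness is never proved as a separate preliminary lemma, so the maximizing sequence must be set up so that $f(x_n)\to M$ holds simultaneously in the finite case and in the case $M=+\infty$; the two-branch choice of $(c_n)$ above is what secures this. Once the sequence is chosen with this property, compactness together with continuity collapse boundedness and attainment into a single limit computation. Note also that the extraction of a convergent subsequence uses only sequential compactness, so no order structure on $I$ is needed and the argument applies verbatim to an arbitrary compact metric space.
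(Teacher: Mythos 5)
Your proof is correct: the two-branch choice of $c_n$ makes $f(x_n)\to M$ in $[-\infty,+\infty]$ whether or not $M$ is finite, sequential compactness plus continuity give $f(x^*)=M$, and this single limit computation does deliver boundedness and attainment at once; applying it to $-f$ settles the minimum. But it is not quite the paper's route, despite your framing. The paper's second proof (the ``programmer's proof'', the one valid for a general compact metric space) never mentions $\sup_{x\in I}f(x)$ at all: it fixes an increasing sequence of finite sets $E_n$ (dyadic points of an interval, or centers of $2^{-n}$-nets) whose union is dense in $I$, takes the explicitly computable maxima $M_n=\max\{f(x):x\in E_n\}$ with maximizers $x_n\in E_n$, extracts a convergent subsequence $x_{n_j}\to x_\infty$, and then needs a step your proof does not: a density-plus-continuity argument showing that $f(x)\le M_n+\epsilon$ for $n$ large, hence $f(x)\le f(x_\infty)+\epsilon$, for every $x\in I$ and every $\epsilon>0$, since the numbers $M_n$ are not a priori tied to the supremum. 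What your approach buys is brevity and a cleaner logical finish (uniqueness of limits in the extended reals replaces the density argument). What the paper's approach buys is exactly the ``programmer's'' content it advertises: $(M_n)_n$ and $(x_n)_n$ are computable from finitely many evaluations of $f$, and $(M_n)_n$ increases monotonically to the maximum, whereas your $x_n$ is chosen nonconstructively from the definition of a supremum whose value is precisely the unknown quantity being sought. Your proof is also entirely unlike the paper's first proof (the ``envelope proof''), which is interval-only and does prove boundedness first, via real induction.
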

\begin{proof}
Consider an increasing sequence $(E_n)_{n=0}^{\infty}$ of finite subsets of $X$ which is dense in $X$, that is, for every $x\in X$ and $\epsilon>0$ there is some $z\in \bigcup_{n=0}^{\infty}E_n$ such that $d(x,z)<\epsilon$. Such a sequence can be constructed using the fact mentioned above about $\epsilon$-nets. Indeed, let $(\epsilon_n)_{n=0}^{\infty}$ be any decreasing sequence of positive numbers tending to zero, say $\epsilon_n:=2^{-n}$, $n\in\N\cup\{0\}$. The above-mentioned fact implies that for each $n\in\N\cup\{0\}$ there exists an $\epsilon_n$-net of $X$, and we denote it by $F_n$. Now let $E_0:=F_0$ and define by induction $E_{n+1}:=E_n\cup F_n$ for all $n\in\N\cup\{0\}$. Then $(E_n)_{n=0}^{\infty}$ is increasing and we have $\cup_{n=0}^{\infty}E_n=\cup_{n=0}^{\infty}F_n$. To see that $\cup_{n=0}^{\infty}F_n$ is dense, let $\epsilon>0$ and $x\in X$ be arbitrary. We can find $n\in\N$ sufficiently large such that $\epsilon_n<\epsilon$; since $F_n$ is an $\epsilon_n$-net, there is some $z\in F_n\subseteq \cup_{k=0}^{\infty}F_k$ such that $d(x,z)<\epsilon_n<\epsilon$, as claimed. 

For each $n\in\N\cup\{0\}$, let 
\begin{equation}\label{eq:M_n m_n}
\begin{array}{c}
M_n:=\max\{f(x):x\in E_n\},\\
m_n:=\min\{f(x):x\in E_n\}.
 \end{array}
\end{equation}
Because $E_n$ is finite for all $n\in\N\cup\{0\}$, it follows that $f$ attains its maximum on   $E_n$, namely, there exists $x_n\in
E_n$ such that $f(x_n)=M_n$. Let $(x_{n_j})_{j=0}^{\infty}$ be any convergent subsequence of $(x_n)_{n=0}^{\infty}$ whose existence is guaranteed because $X$ is compact. Let 
 $x_{\infty}:=\lim_{j\to \infty}x_{n_j}\in X$. Since $f$ is continuous, we have $f(x_{\infty})=\lim_{j\to \infty}f(x_{n_j})=\lim_{j\to \infty}M_{n_j}$.

Actually, the whole sequence $(M_n)_{n=0}^{\infty}$ converges to
$f(x_{\infty})$ since $(M_n)_{n=0}^{\infty}$ is an increasing sequence (because $E_n\subseteq E_{n+1}$ for all $n\in\N\cup\{0\}$) and it has a subsequence which converges to $f(x_{\infty})$. In particular, $M_n\leq f(x_{\infty})$ for all $n\in\N\cup\{0\}$. It remains to show that $x_{\infty}$ is a maximizer. 
Fix arbitrary $x\in X$ and $\epsilon>0$. Since $f$ is continuous on $X$, it
is continuous at $x$. Hence there exists $\delta>0$ such that given $y\in X$, if $d(y,x)<\delta$, then $|f(x)-f(y)|<\epsilon$. The construction and properties of $(E_n)_{n=0}^{\infty}$ implies that for each $n\in\N$  sufficiently large (such that $\epsilon_n<\delta$), there exists $t_n\in F_n\subseteq E_n$ such  that $d(t_n,x)<\delta$. Therefore 
\begin{equation}\label{f(x)<=M_n+epsilon}
f(x)<f(t_n)+\epsilon\leq M_{n}+\epsilon\leq f(x_{\infty})+\epsilon.
\end{equation}
Since $\epsilon$ was arbitrary, we have $f(x)\leq f(x_{\infty})$, as required. 
A similar consideration (now using $(m_n)_{n=0}^{\infty}$ from \beqref{eq:M_n m_n}) shows that $f$ has a minimizer in $X$.
\end{proof}

\subsection{The programmer proof: down-to-earth}\label{subsec:DownToEarth}
The programmer proof not only proves the existence of extreme values of $f$, it also suggests a method to compute them approximately to any desired precision. Indeed, as is well-known and will be proved in Section \bref{sec:UniformContinuity}, since $f$ is continuous and $X$ is compact, $f$ is actually uniformly continuous on $X$. Now, given  $\epsilon>0$, let $\delta>0$ be any delta from the definition of uniform continuity of $f$ on $X$, say the optimal one defined in \beqref{eq:delta_f} below (see also Examples \bref{ex:delta_f}--\bref{ex:delta_f Discontinuous}). Let $n\in\N$ be sufficiently large such that $F_n$ (and hence also $E_n$) from the proof of Theorem \bref{thm:Weierstrass} forms a $\delta$-net of $X$. Let $M_n$ be defined in \beqref{eq:M_n m_n} and choose an arbitrary $x_n\in E_n$ which satisfies $f(x_n)=M_n$. Since $E_n$ is finite, we can compute both $M_n$ and $x_n$ directly, possibly by brute force, namely, by going over all the values $f(x)$, $x\in E_n$ and finding the maximal value (the computation may be demanding for large $n$). Since $E_n$ is a $\delta$-net of $X$, an argument similar to \beqref{f(x)<=M_n+epsilon} shows that $M_n=f(x_n)$ is an $\epsilon$-approximate  maximal value of $f$ and $x_n$ is an $\epsilon$-approximate maximizer $x_n$ of $f$ (i.e., $|f(x_n)-\max\{f(x): x\in X\}|<\epsilon$). One can say similar things regarding the minimal value of $f$. 

In order to implement the method described in the programmer proof in a computer, one  should be able to produce the digital world sequence $(E_n)_{n=0}^{\infty}$. This is possible in down-to-earth settings. Indeed, suppose for instance that  $X=[a,b]$ for $a,b\in \R$, $a\leq b$.  Then we can define for each $n\in \mathbb{N}\cup\{0\}$, 
\begin{equation}\label{eq:E_nInterval}
E_n:=\{p_k: k\in\{0,1,\ldots,2^n\}\},\,\,\textnormal{where}\,\, p_k:=a+\frac{(b-a)k}{2^n},\, k\in\{0,1,\ldots,2^n\}.
\end{equation}
Similarly, for a box $X=\prod_{i=1}^m [a_i,b_i]$ 
contained in $\R^m$, $m\in\N$ (where $a_i\leq b_i$ for each  $i\in\{1,\ldots,m\}$)  we can take 
\begin{equation}\label{eq:E_nBox}
E_n:=\{p_k:=(p_{k,i})_{i=1}^m: k\in\{0,1,\ldots,2^n\}\},
\end{equation}
where 
\begin{equation}
p_{k,i}:=a_i+\frac{(b_i-a_i)k}{2^n}, \,\,k\in\{0,1,\ldots,2^n\},\,\, i\in\{1,\ldots,m\}.
\end{equation}
Of course, when the dimension $m$ grows, the number of points $p_{k,i}$ grows exponentially with $m$, so this type of approximation process seems to be useful only in low (down-to-earth) dimensions. Anyhow, given a compact metric space $(X,d)$, since we already know that $f$ attains its extreme values on $X$, ideas similar to the ones used in the  programmer proof can be used to show that $(E_n)_{n=0}^{\infty}$ can be taken to be any  sequence of finite subsets of $X$ whose union is dense in $X$, where $M_n$ and $m_n$ are still defined by \beqref{eq:M_n m_n}.

Despite the fact that the programmer proof enables one to approximate the extreme values to any desired precision, it does not give sufficient information to locate  the exact maximizers and minimizers of $f$. Nevertheless, if some additional information is known about $f$, then we can say more regarding these points. For instance, suppose that $f$ has a unique maximizer $x_{\infty}$. Then $f(x_{\infty})=\sup\{f(x): x\in X\}=:M$. We claim that in this case it must be that $\lim_{n\to\infty}x_n=x_{\infty}$ (where $x_n$ is defined after \beqref{eq:M_n m_n}).  Indeed, if this is not true, then for some neighborhood $U$ of $x_{\infty}$ and for some subsequence $(x_{n_k})_{k=1}^{\infty}$ we have $x_{n_k}\notin U$ for all $k\in \N$. Now, because $X$ is compact this subsequence has a subsequence $(x_{n_{k_j}})_{j=1}^{\infty}$ which converges to some point $y_{\infty}\in X$ which is outside $U$. In particular, $y_{\infty}\neq x_{\infty}$. Because we know from the programmer proof that $\lim_{n\to\infty}f(x_n)=\lim_{n\to\infty}M_n=M$, we have $\lim_{j\to\infty}f(x_{n_{k_j}})=M$. But $f$ is continuous and hence $\lim_{j\to\infty}f(x_{n_{k_j}})=f(y_{\infty})$. Thus $f(y_{\infty})=M$, that is, $y_{\infty}$ is a maximizer of $f$. Since $y_{\infty}\neq x_{\infty}$, this is a contradiction to the assumption that $f$ has a unique maximizer. Hence indeed $\lim_{n\to\infty}x_n=x_{\infty}$. 

Again, down-to-earth settings ensuring that $f$ has a unique maximizer/minimizer on $X$ are of interest here. A typical and well-known such a setting for the existence of a maximizer  is when $X$ is a compact convex subset of a normed space and $f$ is  strictly concave, namely, $f:X\to \R$ and $f$ satisfies the inequality $f(\lambda x+(1-\lambda)y)>\lambda f(x)+(1-\lambda)f(y)$ for every $x,y\in X$, $x\neq y$ and every  $\lambda\in (0,1)$. A more general but still not  too abstract such a setting is when $X$ is a compact geodesic metric space and $f$ is strictly quasi-concave. More precisely, by saying that $X$ is a geodesic metric space we mean that between every pair of points in $X$ there is a geodesic segment, that is, given $x,y\in X$, there is a distance preserving mapping $\gamma_{x,y}$ which maps  a  real line segment $[r_1,r_2]$ to $X$ such that $\gamma_{x,y}(r_1)=x$ and $\gamma_{x,y}(r_2)=y$; the geodesic segment associated with $x,y$ and $\gamma_{x,y}$ is the image  $\gamma_{x,y}([r_1,r_2])$; many familiar manifolds are geodesic metric spaces, for example, the Euclidean sphere in which a geodesic segment that connects two points is the shortest part of a large circle on which these points are  located. By saying that $f$ is strictly quasi-concave we mean that  $f(z)>\min\{f(x),f(y)\}$ for all $x,y\in X$, $x\neq y$ and all $z\notin\{x,y\}$ which belongs to  a  geodesic  segment which connects $x$ and $y$. Similarly, if $f$ is strictly convex (that is, $-f$ is strictly concave) and $X$ is a compact convex subset of a normed space, or, more generally, $f$ is strictly quasi-convex (i.e., $-f$ is strictly quasi-concave) and $X$ is a compact geodesic metric space, then $f$ has a unique minimizer on $X$, and a discussion similar to the above one shows that the minimizing sequence from the programmer proof converges to this unique minimizer. 
 
Methods for finding optimal values and optimal points of functions, in various settings, are usually dealt with in optimization theory, e.g., in  \cite{AvrielDiewertSchaibleZang2010book,Ben-IsraelBen-TalZlobec1981book,BonnansGilbertLemarechalSagastizabal2006book,BorweinLewis2006book,CambiniMartein2009-book,CensorZenios1997book,Dantzig1963book,Dixit1990book, Nesterov2004book,Rockafellar1970book}. A significant part of this very rich theory is devoted to  convex and concave functions. The method described in the programmer proof enriches further this theory to abstract and down-to-earth settings.  

\subsection{The level-set proof: back to the abstract space}\label{subsec:LevelSetProof}
We now turn to the level-set proof of the Extreme Value Theorem. While this proof may be considered as being somewhat abstract at first glance, it seems to us (at least in retrospective) rather natural because it emphasizes the key players involved in the theorem: an order relation in the range which forces a simple  formulation of the condition of being an extreme value in terms of an intersection of subsets, a mean (namely, semicontinuity) which ensures that the subsets involved in the intersection are well-behaved, and a  criterion which ensures that the intersection is nonempty. Before presenting the proof, we need to recall a few basic definitions and facts. 
\begin{definition}\label{def:Order}
Let  $(L,\leq)$ be a partially ordered set, namely $L$ is a nonempty set and $\leq$ is a partial order relation on $L$. We say that  $(L,\leq)$ is linearly ordered (or fully ordered, or simply ordered) whenever any two elements $\alpha,\beta\in L$ can be compared: either $\alpha\leq \beta$ or $\beta\leq \alpha$. The order topology $\mathcal{T}_L$ on $L$ is the topology generated by the sets $I_{<\alpha}:=\{\beta\in L: \beta<\alpha\}$ and $I_{>\alpha}:=\{\beta\in L: \beta>\alpha\}$, $\alpha\in L$ (called open rays). The triplet $(L,\leq, \mathcal{T}_L)$ is called a linearly ordered topological space. 
\end{definition}
A few important and familiar examples of fully ordered sets are: $\Z$, $\Q$, $\R$, $(-\infty,\infty]$, $[-\infty,\infty)$, and $[-\infty,\infty]$, all of them with the standard order relation between real numbers (or between them and $\pm\infty$). Another example: $\R^m$ with the dictionary (lexicographic) order, $m\in\N$.  Details about the order topology can be found in various sections of  \cite{Munkres2000book} (e.g., Sections 14, 16, 17, 18 and 24). A useful property of fully ordered sets that we need below can be verified immediately: any finite set $S\neq\emptyset$ of  a fully ordered set $L$  has both a least and a greatest element, namely elements $m$ and $M$ such that $m\leq \alpha\leq M$ for each $\alpha\in S$.   
\begin{definition}\label{def:SemiContinuity}
Given a topological space $(X,\mathcal{T})$, a linearly ordered topological space $(L,\leq,\mathcal{T}_L)$, and a function  $f:X\to L$, we say that $f$ is lower semicontinuous if for every $\alpha\in L$ the  $\leq$-level-set  $f_{\leq\alpha}:=\{z\in X: f(z)\leq \alpha\}$ is closed in $X$ (equivalently, $f^{>\alpha}:=\{z\in X: f(z)>\alpha\}$ is open).  We say that $f$ is upper semicontinuous if for every $\alpha\in L$ the $\geq$-level-set $f^{\geq\alpha}:=\{z\in X: f(z)\geq \alpha\}$ is closed in $X$ (equivalently, $f_{<\alpha}:=\{z\in X: f(z)<\alpha\}$ is open). 
\end{definition}
 It is straightforward to check that if $L$ is endowed with the order topology, then $f:X\to L$ is continuous if and only if it  is both lower and upper semicontinuous.  
\begin{definition}
A set $\mathcal{F}$ whose elements are nonempty sets  is said to have the finite intersection property whenever the intersection of any finitely many members of $\mathcal{F}$ is nonempty. 
\end{definition}
\begin{fact}\label{fact:FiniteIntersectionProperty}
A topological space $(X,\mathcal{T})$ is compact if and only if for each set $\mathcal{F}$ which consists  of nonempty closed subsets of $X$ and has the finite intersection property, the intersection of all the members of $\mathcal{F}$ is nonempty  (see \cite[pp. 169-170]{Munkres2000book} for the immediate proof). 
\end{fact}

\begin{theorem}\label{thm:ExtremeValueSemicontunuity}
Let $(X,\mathcal{T})$ be  a compact topological space and $(L,\leq,\mathcal{T}_L)$ be a linearly  ordered topological space. If $f:X\to L$ is lower semicontinuous, then it attains a minimum, and if $f$ is upper semicontonuous, then it attains a maximum. In particular, if $f$ is continuous, then it has a minimizer and a maximizer in $X$.
\end{theorem}
\begin{proof}
Suppose first that $f$ is lower semicontinuous. Our goal is to prove that $f$ has a minimizer, namely a point $x_*$ having the property that $f(x_*)\leq f(x)$ for all $x\in X$. In other words, $x_*$ should belong to the $f_{\leq f(x)}$-level-sets $J_x:=\{z\in X: f(z)\leq f(x)\}$ for each $x\in X$. Equivalently, $x_*\in \bigcap_{x\in X}J_x$. So it is sufficient and necessary to prove that $\bigcap_{x\in X}J_x\neq\emptyset$. Because our space $X$ is compact, Fact \bref{fact:FiniteIntersectionProperty} ensures that $\bigcap_{x\in X}J_x\neq\emptyset$ once we are able to show that the elements of the set  $\mathcal{F}:=\{J_x: x\in X\}$ are nonempty  closed subsets of $X$ and $\mathcal{F}$ has the finite intersection property. Given $x\in X$, we have $x\in J_x$ and hence $J_x\neq\emptyset$. In addition, $J_x$ is closed because $f$ is lower semicontinuous. As for the finite intersection property, consider an arbitrary finite collection  $\{J_{x_i}: i\in \{1,\ldots,n\}\}$, $n\in\N$ of members of $\mathcal{F}$. Since the set $\{f(x_i): i\in \{1,\ldots,n\}\}$ is a finite set of elements in the fully ordered set $L$, there exists at least one index $i_{\textnormal{min}}\in \{1,\ldots,n\}$ such that $f(x_{i_{\textnormal{min}}})=\min\{f(x_i): i\in \{1,\ldots,n\}\}$. It is immediate to verify that $\cap_{i=1}^n J_{x_i}=J_{x_{i_{\textnormal{min}}}}$. Therefore the intersection is nonempty, as required. The proof in the case where $f$ is upper semicontinuous follows a similar reasoning, where now we re-define $J_x:=\{z\in X: f(x)\leq f(z)\}$ for all $x\in X$. 
\end{proof}

The level-set proof was inspired by the proof of K\"othe for a less general statement \cite[p. 41]{Kothe1969book} (e.g., the range of $f$ there is $[-\infty,\infty]$ and not a general linearly  ordered topological space). K\"othe's proof, while containing important components of the above-mentioned proof, seems to be somewhat obscure and not very natural, e.g., because it is based on the  theory of filters, it does not emphasize the involved key players as done above, and the setting is a compact Hausdorff topological  space (the whole discussion of compactness in \cite{Kothe1969book} is restricted to Hausdorff spaces, and this is apparent even in the definition of compact spaces \cite[p. 16]{Kothe1969book}). Perhaps the main contribution of the level-set proof is to refine the main ideas in K\"othe's proof so that the end result will be more accessible, more natural, more illuminating.

\section{Intermediate time}\label{sec:IntermediateValue}
In its classical 1D form, the Intermediate Value Theorem can be written as follows: 
\begin{theorem}\label{thm:ClassicInter}
Let $I=[a,b]\subset \R$. If $f:I\to \mathbb{R}$ is continuous and if $\gamma\in
\mathbb{R}$ is between $f(a)$ and $f(b)$, then there exists $x\in I$
such that $f(x)=\gamma$.
\end{theorem}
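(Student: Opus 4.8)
The plan is to single out a canonical solution and verify that it works, in the same spirit as the previous sections (where the largest maximal point and the optimal $\delta$ were the distinguished objects). First I would normalize the problem: replacing $f$ by $-f$ if necessary, I may assume $f(a)\le c\le f(b)$, and if $c$ coincides with one of the endpoint values there is nothing to prove, so I take the strict case $f(a)<c<f(b)$.

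Next I would introduce the set $S=\{x\in[a,b]:f(x)\le c\}$ and put $x_0=\sup S$. The set $S$ is nonempty since $a\in S$, and it is bounded above by $b$, so $x_0$ is a well-defined point of $[a,b]$. The entire content of the theorem then reduces to the single claim that $f(x_0)=c$, which I would establish by ruling out the two strict inequalities using continuity.

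For the two cases: if $f(x_0)>c$, continuity produces a $\delta>0$ with $f>c$ on $(x_0-\delta,x_0+\delta)\cap I$; but $x_0=\sup S$ forces points of $S$ arbitrarily close to $x_0$ from the left (and here $x_0>a$, since $f(a)<c$), and such a point would satisfy both $f\le c$ and $f>c$, a contradiction. If instead $f(x_0)<c$, then $x_0\ne b$ because $f(b)>c$, and continuity yields a $\delta>0$ with $f<c$ on a right-hand neighborhood of $x_0$ inside $I$; any such point lies in $S$ yet exceeds $x_0$, contradicting $x_0=\sup S$. Hence $f(x_0)=c$.

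The step I expect to demand the most care is the \textbf{boundary bookkeeping}: ensuring in each case that $x_0$ is genuinely interior on the relevant side, so that the one-sided neighborhood actually meets $I$, and that the supremum characterization really does supply points of $S$ approaching $x_0$ from the left. A fully constructive alternative, closer to the ``programmer's'' flavor of the previous section, is the \emph{bisection scheme}: build nested intervals $[a_n,b_n]$ with $f(a_n)\le c\le f(b_n)$ by testing the midpoint at each stage, note that $b_n-a_n=(b-a)/2^n\to 0$, and pass to the common limit point $x$; continuity together with $f(a_n)\le c\le f(b_n)$ then forces $f(x)=c$. I would present the supremum argument as the primary proof and mention bisection as its constructive counterpart.
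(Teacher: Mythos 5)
Your proof is correct, but it takes a genuinely different route from the paper's. The paper deduces Theorem \ref{thm:ClassicInter} as a corollary of the topological Theorem \ref{thm:NonClassicInter}: taking $D=(-\infty,c)$, the sets $f^{-1}(\Int(D))$ and $f^{-1}(\Ext(D))$ are open and nonempty, so if $f$ never hit $\partial D=\{c\}$ they would disconnect $[a,b]$; the completeness axiom enters only through the quoted (not reproved) fact that intervals are connected. You instead invoke completeness directly, via $x_0=\sup\{x\in[a,b]:f(x)\le c\}$, and eliminate $f(x_0)>c$ and $f(x_0)<c$ by two one-sided continuity arguments --- the classical Bolzano-style supremum proof. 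Your case analysis is sound: in the first case the supremum supplies points of $S$ in $(x_0-\delta,x_0]$, each of which would have to satisfy both $f\le c$ and $f>c$; in the second, $x_0<b$ lets you produce a point of $S$ strictly beyond $x_0$. In effect, your argument inlines the very supremum reasoning that the paper hides inside the statement ``every interval is connected.'' As for what each approach buys: the paper's factorization yields a far more general theorem --- it covers discontinuous functions satisfying only the weak continuity of Definition \ref{def:OpenContinuous}, arbitrary connected domains, and the fixed point application in the paper's examples --- and it reinterprets the intermediate value property as ``hitting the boundary $\partial D$.'' Your argument is more elementary and self-contained, and it singles out a canonical witness: since every solution of $f(x)=c$ lies in $S$, your $x_0$ is the \emph{largest} such solution, which fits the paper's recurring theme of exhibiting extremal objects (the optimal $\delta$, the largest maximal point), whereas the paper's proof of Theorem \ref{thm:NonClassicInter} is, as its concluding remarks acknowledge, a pure existence proof. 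Your closing bisection remark likewise parallels the constructive refinement the paper develops in its concluding section.
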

Familiar proofs of either Theorem \bref{thm:ClassicInter} or its traditional generalization saying that a continuous function maps a connected topological space to a connected topological space are heavily based on the continuity of the given function (see, e.g., \cite[p. 153]{Bartle1976book}, \cite[pp. 62--63]{Fitzpatrick2006book}, \cite[pp. 282--283]{JacobEvans2016book}, \cite[pp. 57, 62]{Mercer2014book}, \cite[pp. 258--259]{Sagan1974book}, \cite[pp. 238--239]{Tao2016bookI}). 

\subsection{Being an intermediate: this is a boundary value problem}\label{subsec:Intermediate}
Is it possible to formulate an Intermediate Value Theorem which not only generalizes Theorem \bref{thm:ClassicInter} but also  allows a class of discontinuous functions? As we show below (Theorem \bref{thm:NonClassicInter}), the answer is positive once we interpret the meaning of the intermediate value property as follows: if $f:X\to Y$ passes through both a subset $D\subseteq Y$ and through its complement $Y\backslash D$, then $f$ must pass through the boundary $\partial D$, which can be thought of as being an intermediate set between $D$ and $Y\backslash D$ (or between the interior $\Int(D)$ of $D$ and its exterior $\Ext(D):=X\backslash (D\cup \partial D)$). 

Before formulating the theorem, we need to recall some terminology and notation. A topological space $X$ is called connected if it cannot be represented as $X=A\cup B$, where $A$ and $B$ are two nonempty, disjoint and open subsets in $X$, or equivalently, two nonempty, disjoint and closed subsets of $X$. As is well known, every interval contained in  $\mathbb{R}$ is a connected space. 

\begin{theorem}\label{thm:NonClassicInter}
Let $(X,\mathcal{T}_X)$ be a connected topological space and let $(Y,\mathcal{T}_Y)$ be a
topological space. Suppose that $D\subseteq Y$ and $f:X\to Y$ are given.
If there are $a,b\in X$ such that $f(a)\in D$ and $f(b)\notin D$ and if either both the inverse images $f^{-1}(\Int(D))$ and $f^{-1}(\Ext(D))$ are open or both of these subsets are closed, then there exists $x\in X$ such that $f(x)\in \partial D$. In particular, if $f$ is continuous on $X$ and there are $a,b\in X$ such that $f(a)\in D$ and $f(b)\notin D$, then there exists $x\in X$ such that $f(x)\in \partial D$.
\end{theorem}
\begin{proof}
Assume first that both $f^{-1}(\Int(D))$ and $f^{-1}(\Ext(D))$ are open. The proof in the case where both of these subsets are closed is similar. If $f(a)\in \partial D$ or $f(b)\in \partial D$, then the proof is complete. Otherwise, we have $f(a)\in D\backslash \partial D$ and $f(b)\notin D\cup \partial D$. Thus $f(a)\in \Int(D)$ and $f(b)\in \Ext(D)$, and so $a\in f^{-1}(\Int(D))$ and
$b\in f^{-1}(\Ext(D))$. Hence $f^{-1}(\Int(D))$ and $f^{-1}(\Ext(D))$ are nonempty sets which are also open by our assumption. Now, since
\begin{multline*}
X=f^{-1}(Y)=f^{-1}(\Int(D)\cup \partial D \cup
\Ext(D))=f^{-1}(\Int(D))\cup f^{-1}(\partial D)\cup
f^{-1}(\Ext(D)),
\end{multline*}
it follows that if $f^{-1}(\partial D)$ is empty, then $X$ is a union
of two open, disjoint and nonempty sets, and this contradicts the
assumption that $X$ is connected. Hence $f^{-1}(\partial D)$ is
nonempty, that is, there exists $x\in X$ such that $f(x)\in \partial D$, as required. Finally, assume that $f$ is continuous on $X$ and there are $a,b\in X$ such that $f(a)\in D$ and $f(b)\notin D$. The continuity assumption on $f$ implies that both $f^{-1}(\Int(D))$ and $f^{-1}(\Ext(D))$ are open. Hence, by what we proved above there exists $x\in X$ such that $f(x)\in \partial D$, as claimed.
\end{proof}
One can think of the assumption that both $f^{-1}(\Int(D))$ and $f^{-1}(\Ext(D))$ are open as expressing a weak form of continuity, and to say that $f$ is inverse-open with respect to both $\Int(D)$ and $\Ext(D)$ (or inverse-closed with respect to these sets if both $f^{-1}(\Int(D))$ and $f^{-1}(\Ext(D))$ are closed). The following example shows that this kind of continuity is indeed very weak. 
\begin{example}
Let $X:=\R$, $Y:=\R$ and $f:X\to Y$ be defined  by $f(x):=x$ when $x$ is irrational, $f(x):=2x$ when $x\in\Q\backslash \{1/n: n\in \N\}$ and $f(1/n):=1$ whenever $n\in \N$. Let $D:=(0,\infty)$. Then $D=\Int(D)$, $\Ext(D)=(-\infty,0)$, $f(1)\in D$  and $f(-1)\notin D$. Moreover, $f^{-1}(\Int(D))=(0,\infty)$ and  $f^{-1}(\Ext(D))=(-\infty,0)$. As a result, the conditions of Theorem \bref{thm:NonClassicInter} are satisfied, and indeed  $f(0)\in \partial D=\{0\}$. But $f$  is discontinuous at every point. 
\end{example}
There is another theorem which generalizes the Intermediate Value Theorem. It says that the image of a connected topological space by a continuous function is a connected topological space \cite[Theorem 23.5, p. 150]{Munkres2000book},\cite[p. 93]{Rudin1976book}. There are two main
differences between this theorem and Theorem \bref{thm:NonClassicInter}. First, in this theorem the intermediate value property is expressed in the connectivity of $f(X)$, while in Theorem \bref{thm:NonClassicInter} it is expressed in the manner mentioned in the beginning of this subsection. Second, this theorem assumes that $f$ is continuous, while Theorem \bref{thm:NonClassicInter} allows $f$ to be discontinuous.

\subsection{Down-to-Earth + abstract space: the next generation}\label{subsec:DownToEarth2}
A simple down-to-earth application of Theorem \bref{thm:NonClassicInter} is to prove Theorem  \bref{thm:ClassicInter}, as done below.
\begin{proof}[Proof of Theorem \bref{thm:ClassicInter}] The assertion is obviously satisfied if $f(a)=\gamma$ or $f(b)=\gamma$. From now on assume that $\gamma\notin\{f(a),f(b)\}$. Assume first that $f(a)<\gamma<f(b)$ and denote $D:=(-\infty,\gamma)$. Then 
$f(a)\in \Int(D)=D$, $f(b)\in \Ext(D)=(\gamma,\infty)$ and
$f^{-1}(\Int(D))$ and $f^{-1}(\Ext(D))$ are open because $f$ is continuous.
Since $I$ is connected,  by Theorem \bref{thm:NonClassicInter} there
is $x\in I$ such that $f(x)\in
\partial D=\{\gamma\}$, i.e., $f(x)=\gamma$. The proof in the case where $f(b)<\gamma<f(a)$ follows a similar reasoning, where now we re-define $D:=(\gamma,\infty)$. 
\end{proof}

 Another down-to-earth and somewhat unexpected application of Theorem \bref{thm:NonClassicInter} is the possibility to approximate, to any desired precision, an intermediate point, namely of  a point  $x\in X$ for which $f(x)\in \partial D$. At first glance this task seems to be impossible, since the proof of Theorem \bref{thm:NonClassicInter} is a pure existence proof, that is, a proof without any single constructive clue. Despite this, \emph{sometimes the above-mentioned task can be realized}. For example, assume that $X:=[a,b]\subset \mathbb{R}$ and that the conditions needed in Theorem \bref{thm:NonClassicInter} hold (in particular, $f(a)\in D$ and $f(b)\notin D$). Denote $a_0:=a$, $b_0:=b$ and $f_0:=f$. Theorem ~\bref{thm:NonClassicInter} ensures that there exists $x_0\in [a_0,b_0]$ such that $f_0(x_0)\in \partial D$. Consider the  point $P_1:=\frac{1}{2}(a_0+b_0)$. Either $f_0(P_1)\in D$ or $f_0(P_1)\notin D$. In the first case let $a_1:=P_1$ and $b_1:=b_0$, and in the second case let $a_1:=a_0$ and $b_1:=P_1$. Denote by $f_1$ the restriction of $f_0$ to $[a_1,b_1]$. We have $f_1(a_1)\in D$, $f_1(b_1)\notin D$, $[a_1,b_1]\subset X$ and $|b_1-a_1|=0.5|b_0-a_0|$. Moreover, our assumption on $f_0^{-1}(\Int(D))$ and $f_0^{-1}(\Ext(D))$ implies that either both $f_1^{-1}(\Int(D))$ and $f_1^{-1}(\Ext(D))$ are open in $[a_1,b_1]$ or both of them are closed there. Hence Theorem ~\bref{thm:NonClassicInter} implies that there exists $x_1\in [a_1,b_1]$ such that $x_1\in \partial D$. Since we know $a_1$ and $b_1$ explicitly and since the length of $[a_1,b_1]$ is half of the length of $[a_0,b_0]$, this means that we have a better estimate for $x_1$ than the estimate that we had $x_0$. By repeating this process one essentially obtains the bisection method (but in a non-standard setting in which the function is not necessarily continuous) and finds an approximate intermediate point which deviates, in the $n$-th step, from a true intermediate point by at most $(b-a)\cdot 2^{-n}$.

It is also possible to use Theorem \bref{thm:NonClassicInter} in order to prove a somewhat abstract space version  of the classical Intermediate Value Theorem, namely \cite[Theorem 24.3, p. 154]{Munkres2000book} in which connected linearly ordered topological spaces (Definition \bref{def:Order}) appear. 
\begin{theorem}\label{thm:InterOrder}
Let $(X,\mathcal{T}_X)$ be a connected topological space and let $(Y,\leq,\mathcal{T}_L)$ be a linearly ordered topological space. Assume that $f:X\to Y$ is continuous. Given $a,b\in X$, if $\gamma\in Y$ lies between $f(a)$ and $f(b)$, then there exists $x\in X$ such that $f(x)=\gamma$.
\end{theorem}
The proof is similar to the proof of Theorem \bref{thm:ClassicInter},  where now we define $D:=I_{<\gamma}$ if $f(a)<\gamma<f(b)$ and $D:=I_{>\gamma}$ if $f(b)<\gamma<f(a)$, and we observe that $\partial D\subseteq \{\gamma\}$.

\section{Uniform continuity: the final frontier}\label{sec:UniformContinuity}
A well-known theorem, which is sometimes called the ``Uniform Continuity Theorem'' or the  ``Heine-Cantor Theorem'', says that any real continuous function defined on a closed and bounded interval $X$ of $\R$ is uniformly continuous, i.e., for every $\epsilon>0$ there exists $\delta>0$ such that for all $x,y\in X$ satisfying  $|x-y|<\delta$, we have  $|f(x)-f(y)|<\epsilon$. A more general version of this theorem says that  a continuous function $f:X\to Y$ acting between a compact metric space $(X,d_X)$ and a metric space $(Y,d_Y)$ is uniformly continuous, namely for each $\epsilon>0$ there exists $\delta>0$ such that for all $x\in X$ and $y\in X$ satisfying  $d_X(x,y)<\delta$, we have  $d_Y(f(x),f(y))<\epsilon$. Familiar proofs of this theorem, for instance, the ones which appear in 
\cite[p. 229]{Folland1984book}, \cite[pp. 87-88]{HewittStromberg1965book},
\cite[pp. 273-274]{Hille1966book},  \cite[pp. 19-20]{Jones2001book}, \cite[p. 193]{Kitchen1968book}, \cite[pp. 33-34]{Lang1969book}, \cite[p. 395]{Mercer2014book}, \cite[p. 168-169]{Nitecki2009book}, \cite[pp. 48-49, 157]{Royden1988book}, \cite[p. 91]{Rudin1976book},  \cite[p. 114]{Sagan1974book}, \cite[p. 143-144]{Spivak1994book}, \cite[pp. 247-248]{Tao2016bookI}, and \cite[pp. 323-324, 682]{ThomsonBrucknerBruckner2008book}, show the existence of such a positive number $\delta$, but they do not explain how to find it explicitly. In particular, no information is provided regarding how to find the largest possible such $\delta$ (the optimal delta).

\subsection{The optimal delta: a new hope}\label{subsec:NewHope}
Is it possible to find explicitly the optimal $\delta$? Proposition \bref{prop:MaximalDelta} below shows that the answer is positive. A key step in establishing this proposition is simply to reformulate the condition of uniform continuity, as done in Lemma \bref{lem:EquivalentConditionUniCont} below. The uniform continuity of a continuous function defined on a compact metric space, as well as more general results, follow  as a consequence (Theorem \bref{thm:UniCont} below). 

\begin{lemma}\label{lem:EquivalentConditionUniCont} 
Let $(X,d_X)$ and $(Y,d_Y)$ be two metric spaces, and let $f:X\to Y$. 
Then $f$ is uniformly continuous  if and only if for each $\epsilon>0$ there exists $\delta>0$  such that for all $x,y\in X$ satisfying $d_Y(f(x),f(y))\geq\epsilon$, we have $d_X(x,y)\geq \delta$. 
\end{lemma}
\begin{proof}
The assertion follows directly from the definitions, using contrapositive (any $\epsilon>0$ and $\delta>0$ which satisfy the first condition are good for the second one, and vice versa). 
\end{proof}
In  other words, $f$ is uniformly continuous  if and only if for each $\epsilon>0$ there exists $\delta>0$  such that for all $(x,y)\in A_{\epsilon}$ we have $d_X(x,y)\geq \delta$, where 
\begin{equation}\label{eq:A_epsilon}
A_{\epsilon}:=\{(x,y)\in X^2: d_Y(f(x),f(y))\geq \epsilon\}.
\end{equation}
An obvious property of $A_{\epsilon}$ is that $(x,x)\notin A_{\epsilon}$ for all $\epsilon>0$ and $x\in X$.

The following proposition introduces the optimal delta and describes some properties of it.
\begin{proposition}\label{prop:MaximalDelta}
Let $(X,d_X)$ and $(Y,d_Y)$ be two metric spaces. Given $f:X\to Y$, 
let $\delta_f:[0,\infty) \to [0,\infty]$ be defined by
\begin{equation}\label{eq:delta_f}
\delta_f(\epsilon):=\left\{\begin{array}{lll}
\inf \{d_X(x,y): (x,y)\in A_{\epsilon}\}, & \textnormal{if}\,\epsilon\in [0,\infty)\,\textnormal{and}\, A_{\epsilon}\neq \emptyset\\
\infty, & \textnormal{if}\,\epsilon\in [0,\infty)\,\textnormal{and}\,\,A_{\epsilon}=\emptyset,
\end{array}
\right.
\end{equation}
where $A_{\epsilon}$ is defined in \beqref{eq:A_epsilon}. Then the following properties hold:
\begin{enumerate}[(i)]
\item\label{item:delta_f is increasing and finite} $\delta_f$ is nonnegative, monotone increasing, and satisfies $\delta_f(0)=0$. In addition,  given $\epsilon\in [0,\infty)$, we have that  $\delta_f(\epsilon)$ is  finite if and only if $A_{\epsilon}\neq \emptyset$. In particular, $\delta_f$ is finite on the set  $\{0\}\cup [0,M_f)$, where $M_f$ is the oscillation of $f$, namely 
\begin{equation}\label{eq:M_f}
M_f:=\sup\{d_Y(f(x),f(y)): x,y\in X\}. 
\end{equation}
Finally, when $M_f<\infty$, then $\delta_f$ is infinite on $(M_f,\infty)$.
\item\label{item:delta_f is maximal} If $\delta_f(\epsilon)>0$ for each $\epsilon\in (0,\infty)$, then $f$  is uniformly continuous; moreover, $\delta_f$ assigns to each $\epsilon>0$  the optimal delta, that is, the largest possible delta from the definition of uniform continuity (in particular, when $\delta_f(\epsilon)=\infty$, then any $\delta\in (0,\infty)$ can be associated with $\epsilon$ in this definition). If $\delta_f(\epsilon)=0$ for some $\epsilon>0$, then $f$ is not uniformly continuous. In particular, $f$ is uniformly continuous if and only if $\delta_f(\epsilon)>0$ for each $\epsilon\in (0,\infty)$. 
\end{enumerate}
\end{proposition}

\begin{proof}
\begin{enumerate}[(i)]
\item The assertions are a simple consequence of \beqref{eq:A_epsilon},\beqref{eq:delta_f}, and \beqref{eq:M_f} (for instance, consider the assertion regarding $\{0\}\cup[0,M_f)$: if $M_f>0$, then for each $\epsilon\in [0,M_f)$ there exists, by the definition of $M_f$, a pair $(x,y)\in X^2$ satisfying $\epsilon<d_Y(f(x),f(y))$; thus $(x,y)\in A_{\epsilon}$ and hence \beqref{eq:delta_f} implies that $\delta_f(\epsilon)$ belongs to the interval $[0,d(x,y)]$, namely it is finite). 

\item Suppose that $\delta_f(\epsilon)>0$ for all $\epsilon\in (0,\infty)$. Fix arbitrary  $\epsilon\in (0,\infty)$ and $\delta\in (0,\delta_f(\epsilon))$. Given $x,y\in X$ satisfying $d_X(x,y)<\delta$, we must have $d_Y(f(x),f(y))<\epsilon$. Indeed, suppose to the contrary that this inequality is violated; then $(x,y)\in A_{\epsilon}$ by \beqref{eq:A_epsilon} and hence, from \beqref{eq:delta_f}, we have $\delta_f(\epsilon)\leq d_X(x,y)$, a contradiction because $d_X(x,y)<\delta<\delta_f(\epsilon)$ by our assumptions. We conclude that the assumption $\delta_f(\epsilon)>0$ for all $\epsilon\in (0,\infty)$ implies that $f$ is uniformly continuous. 

Now fix some $x,y\in X$ satisfying $d_X(x,y)<\delta_f(\epsilon)$. It must be that $d_Y(f(x),f(y))<\epsilon$, because if not, then we have $d_Y(f(x),f(y))\geq \epsilon$ and therefore $(x,y)\in A_{\epsilon}$; thus \beqref{eq:delta_f} implies that $\delta_f(\epsilon)\leq d_X(x,y)$, a contradiction. Thus if $\delta_f(\epsilon)$ is finite, then it can be used as a delta associated with $\epsilon$ in the  definition of uniform continuity.  Moreover, if  $\delta<\delta_f(\epsilon)$ and $d_X(x,y)<\delta$ for some $(x,y)\in X^2$, then $d_X(x,y)<\delta_f(\epsilon)$, and hence from the previous lines we conclude that $d_Y(f(x),f(y))<\epsilon$. Thus any $\delta\in (0,\delta_f(\epsilon))$ can be associated with $\epsilon$ in the definition of uniform continuity of $f$. 

In order to show that $\delta_f(\epsilon)$ is the largest possible delta associated with $\epsilon$ in the  definition of uniform continuity, we still need to show that any other $\delta>0$ associated with $\epsilon$ is not greater than $\delta_f(\epsilon)$. Indeed, if $A_{\epsilon}=\emptyset$, then  \beqref{eq:delta_f} implies that $\delta<\delta_f(\epsilon)=\infty$, as required.  Suppose now that $A_{\epsilon}\neq \emptyset$ and let $(x,y)\in A_{\epsilon}$. It must be that  $\delta\leq d_X(x,y)$, because if this inequality is not true, then the choice of $\delta$ and the fact that $f$ is uniformly continuous imply that $d_Y(f(x),f(y))<\epsilon$, a contradiction to the assumption that $(x,y)\in A_{\epsilon}$. We conclude that  $\delta$ is a lower bound of the set $\{d_X(x,y):(x,y)\in A_{\epsilon}\}$. Because $\delta_f(\epsilon)$ is the maximal such a lower bound as follows from \beqref{eq:delta_f}, it follows that $\delta\leq \delta_f(\epsilon)$. To conclude, $\delta\leq\delta_f(\epsilon)$ no matter if $A_{\epsilon}=\emptyset$ or $A_{\epsilon}\neq \emptyset$, and hence $\delta_f(\epsilon)$ is indeed the optimal delta.

Now suppose that $\delta_f(\epsilon)=0$ for some $\epsilon>0$. Assume to the contrary that $f$ is uniformly continuous. Since $\delta_f(\epsilon)$ is finite, we have $A_{\epsilon}\neq\emptyset$ (by (Part \beqref{item:delta_f is increasing and finite}). By Lemma  \bref{lem:EquivalentConditionUniCont} there exists $\delta>0$ such that $d_X(x,y)\geq \delta$ for all $(x,y)\in A_{\epsilon}$. Thus $\delta$ is a positive lower bound of the set $\{d_X(x,y):(x,y)\in A_{\epsilon}\}$. Since $\delta_f(\epsilon)$ is the maximal such a lower bound, we have $\delta\leq \delta_f(\epsilon)=0$, a contradiction. Thus $f$ is not uniformly continuous. Finally, from previous lines we see that $f$ is uniformly continuous if and only if $\delta_f(\epsilon)>0$ for each $\epsilon\in (0,\infty)$. 
\end{enumerate}
\end{proof}

The optimal delta  modulus $\delta_f$  from \beqref{eq:delta_f} can be  thought of as being a modulus which is dual to to the modulus of (uniform) continuity 
\begin{equation}\label{eq:w}
w_f(\delta):=\sup\{d_Y(f(x),f(y)): x,y\in X, \,d_X(x,y)\leq \delta\}.
\end{equation} 
 Local versions of $\delta_f$ can be defined too, i.e.,  
\begin{equation*}%\label{eq:delta(x)}
\delta_f(\epsilon,x):=\inf \{d_X(x,y):  y\in X,\,d_Y(f(x),f(y))\geq \epsilon\},\quad \forall \epsilon\in [0,\infty),\,\forall x\in X,
\end{equation*}
where $\inf\emptyset:=\infty$. In other words (and using a reasoning similar to the proof of Proposition \bref{prop:MaximalDelta}), if we fix some point $x\in X$ and a positive number $\epsilon$, then $\delta_f(\epsilon,x)$ describes  the optimal delta associated with $\epsilon$ and $x$ in the definition of continuity of $f$ at $x$. 

Interestingly, the setting needed for the definition of $\delta_f$ is wider than metric spaces, since in Lemma \bref{lem:EquivalentConditionUniCont}  and Proposition \bref{prop:MaximalDelta} not all of the assumptions in the definition of metric spaces have been used (for example, neither the triangle inequality nor the symmetry of the distance function have been used). Thus $\delta_f$ may be useful for distance functions, divergences and distortion measures used in data processing \cite{Basseville2013jour}, data analysis \cite{Kogan2007book}, information theory \cite{Gray2013book} and many other scientific and technological areas \cite{DezaDeza2016book}. 

\subsection{The compactness strikes back}\label{subsec:Compactness}
Using tools developed earlier, we can now prove a general version of the Uniform Continuity Theorem, a version in which the \emph{a priori}  condition on the involved function is weaker than continuity, and the \emph{a priori} condition on the involved space is weaker than compactness. 

\begin{theorem}\label{thm:UniCont}
Let $(X,d_X)$ and $(Y,d_Y)$ be two metric spaces and let $f:X\to Y$. Consider the following statements:
\begin{enumerate}[(i)]
\item\label{item:f} $(X,d_X)$ is compact and $f$ is continuous;
\item\label{item:F_epsilon} $(X,d_X)$ is compact and the function $F:X^2\to \R$ which is defined by $F(x,y):=d_Y(f(x),f(y))$ for each $(x,y)\in X^2$ is upper semicontinuous (Definition \bref{def:SemiContinuity});
\item\label{item:A_epsilonClosed} $(X,d_X)$ is compact and $A_{\epsilon}$ (from \beqref{eq:A_epsilon}) is closed in $X^2$ for all $\epsilon>0$;
\item\label{A_epsilonIsCompact}  $A_{\epsilon}$  is compact for each $\epsilon>0$;
\item\label{item:either_epsilon} for each $\epsilon>0$, either $A_{\epsilon}=\emptyset$ or  $A_{\epsilon}\neq \emptyset$ and the function $d_X$ attains a  minimum on $A_{\epsilon}$ at some point $(x_0,y_0)\in A_{\epsilon}$; moreover, in the second case $\delta_f(\epsilon)=d_X(x_0,y_0)$; 
\item\label{item:f_is_UniCont} $f$ is uniformly continuous.
\end{enumerate}
Then \beqref{item:f} $\Lra$ \beqref{item:F_epsilon} $\Lra$ \beqref{item:A_epsilonClosed} $\Lra$ \beqref{A_epsilonIsCompact} $\Lra$ \beqref{item:either_epsilon} $\Lra$ \beqref{item:f_is_UniCont}. 
\end{theorem}
\begin{proof}
\beqref{item:f} $\Lra$ \beqref{item:F_epsilon}: In this case $F$ is even continuous as follows from the triangle inequality and the continuity of $f$.

\beqref{item:F_epsilon} $\Lra$ \beqref{item:A_epsilonClosed}: Let $\epsilon>0$ be arbitrary. From \beqref{eq:A_epsilon} the set $A_{\epsilon}$ is nothing but the $\geq$-level-set $F^{\geq \epsilon}$ (Definition \bref{def:SemiContinuity}) and hence it is closed in $X^2$ since we assume that $F$ is upper semicontinuous.

\beqref{item:A_epsilonClosed} $\Lra$ \beqref{A_epsilonIsCompact}: Let $\epsilon>0$ be arbitrary. Since $(X,d_X)$ is compact, also $(X^2,d_{X^2})$ is compact, with, say,
\begin{equation*}
d_{X^2}((x_1,x_2),(y_1,y_2)):=\sqrt{(d_X(x_1,y_1))^2+(d_X(x_2,y_2))^2}\,\,\,\,\forall (x_1,x_2),(y_1,y_2)\in X^2. 
\end{equation*}
Because we assume that $A_{\epsilon}\subseteq X^2$ is closed, it follows that $A_{\epsilon}$ is compact as a closed subset of a compact space. 

\beqref{A_epsilonIsCompact} $\Lra$ \beqref{item:either_epsilon}: Let $\epsilon>0$ be arbitrary. If $A_{\epsilon}=\emptyset$, then there is nothing to prove. Assume now that $A_{\epsilon}\neq\emptyset$. The restriction of $d_X$ to $A_{\epsilon}$ is a real-valued continuous function defined on the compact space $A_{\epsilon}$. Hence the Extreme Value Theorem (Theorem  \bref{thm:Weierstrass} or Theorem ~\bref{thm:ExtremeValueSemicontunuity}) implies that $d_X$ has a minimizer $(x_0,y_0)$ in $A_{\epsilon}$. 
It follows from \beqref{eq:delta_f} that   $\delta_f(\epsilon)=d_X(x_0,y_0)$, as required. 

\beqref{item:either_epsilon} $\Lra$ \beqref{item:f_is_UniCont}
According to Proposition \bref{prop:MaximalDelta}\beqref{item:delta_f is maximal}, for proving that $f$ is uniform continuous on $X$ it is sufficient to show that the optimal delta from \beqref{eq:delta_f}  satisfies $\delta_f(\epsilon)>0$ for each $\epsilon>0$. Let $\epsilon\in (0,\infty)$ be given.  If $A_{\epsilon}=\emptyset$, then $\delta_f(\epsilon)=\infty>0$, as asserted. Assume now that $A_{\epsilon}\neq\emptyset$. By our assumption there exists a minimizer $(x_0,y_0)$ of $d_X$ on  $A_{\epsilon}$. From \beqref{eq:A_epsilon} we have $x_0\neq y_0$, and from \beqref{eq:delta_f} we have $\delta_f(\epsilon)=d_X(x_0,y_0)>0$, as required.
\end{proof}

\begin{example}\label{ex:delta_f}
Let $\alpha\in (0,\infty)$ and $b\in (0,\infty]$ be fixed. Define $X:=[0,b]$ if $b<\infty$ and $X:=[0,\infty)$ if $b=\infty$. Let  $Y:=[0,\infty)$. Let $d_X$ be the usual absolute value metric on $X$, namely $d_X(x,y):=|x-y|$ for all $x,y\in X$. Similarly, let $d_Y$ be the absolute value metric on $Y$. Define $f:X\to Y$ by $f(x):=x^{\alpha}$ for each $x\in X$. By using the method suggested in Theorem \bref{thm:UniCont}, namely by trying to minimize the continuous function $d_X$ over $A_{\epsilon}$, and by using elementary calculus and separating into cases, one can obtain explicitly $\delta_f$ as follows (the analysis is simple, though a bit technical; it can be found in the appendix below):

\begin{equation*}
\delta_f(\epsilon)=\left\{
\begin{array}{lll}
 {b-(b^{\alpha}-\epsilon)}^{1/\alpha} & \textnormal{if}\,\,\alpha\geq 1,\,b\in (0,\infty),\,\epsilon\in [0,b^{\alpha}],\\
 0 & \textnormal{if}\,\,\alpha>1,\,b=\infty,\,\epsilon\in [0,\infty),\\
\epsilon^{1/\alpha} & \textnormal{if}\,\,0<\alpha\leq 1,\,b\in (0,\infty),\,\epsilon\in[0,b^{\alpha}],\\
\epsilon^{1/\alpha} & \textnormal{if}\,\, 0<\alpha\leq 1,\,b=\infty, \,\epsilon\in [0,\infty),\\
\infty &  \textnormal{if}\,\,\alpha\in (0,\infty),\,b\in (0,\infty),\,\epsilon>b^{\alpha}.
\end{array}\right.
\end{equation*}
\end{example}

\subsection{Return of the semicontinuity}\label{subsec:Semicontinuity}
It is tempting to conjecture, and the above example supports this conjecture, that the optimal delta $\delta_f$ is a continuous function of its variable $\epsilon$.  Unfortunately, in general this is not true. Indeed,  the following example presents a continuous function $f$ for which $\delta_f$ is discontinuous at infinitely many points  (see the appendix below for the simple, though a bit lengthy, explanation): 
\begin{example}\label{ex:delta_f Discontinuous}
Let $f:[0,1]\to \R$ be  the ``decreasing chainsaw'' function defined by $f(0):=0$, $f(1):=1$, and for all other $t\in [0,1]$ by 
\begin{equation}\label{eq:Chainsaw}
f(t):=\left\{\begin{array}{ll}
\displaystyle{\frac{1}{n}-(2n-1)\left(t-\frac{1}{n}\right)},& \textnormal{if}\,\, t\in \left[\displaystyle{\frac{1}{n}},\displaystyle{\frac{2}{2n-1}}\right],\, 2\leq n\in\N,\\\\
 \displaystyle{(2n-1)\left(t-\frac{2}{2n-1}\right)},  & \textnormal{if}\,\,t\in \left[\displaystyle{\frac{2}{2n-1}},\displaystyle{\frac{1}{n-1}}\right],\, 2\leq n\in\N.
\end{array}\right.
\end{equation}
\end{example} 
 
In light of Example \bref{ex:delta_f Discontinuous}, one may wonder if something can be done in order to save the day regarding $\delta_f$. The answer is that a few such (imperfect) possibilities  exist. The first is to abandon $\delta_f$ and instead to try to find other deltas corresponding to $\epsilon$ from the definition of uniform continuity, hopefully deltas which are  continuous as a function of $\epsilon$. As can be seen in 
\cite{AlmiraPassot2008jour, ArticoMarcon1993inproc, deMarco2001jour, Enayat2000jour, Guthrie1983jour, SeidmanChildress1975jour} and \cite[pp. 240-241]{RepovsSemenov1998book}, it turns out that in various settings it is indeed  possible to select, among the possible deltas coming from the definition of continuity, a one which is a continuous function of $\epsilon$ (and, sometimes, also of $x$). The second possibility is to continue with $\delta_f$, but to focus the attention on other properties of it with the hope that some of them are nice. This is done in the next proposition which also finishes our trek. 

\begin{proposition}
Let $(X,d_X)$ and $(Y,d_Y)$ be two metric spaces and let $f:X\to Y$. Then $\delta_f$ from \beqref{eq:delta_f} has at most countably many points of discontinuity and it is differentiable almost everywhere on $(0,M_f)$, where $M_f$ is defined in \beqref{eq:M_f}. Moreover, if $X$ is compact and $f$ is continuous, then $\delta_f$ is lower semicontinuous. 
\end{proposition}
\begin{proof}
Proposition \bref{prop:MaximalDelta}\beqref{item:delta_f is increasing and finite} ensures that  $\delta_f$ is increasing and finite on $(0,M_f)$. Thus, a theorem of Lebesgue \cite[p. 514]{Jones2001book} ensures that $\delta_f$  is differentiable almost everywhere in this interval. Since $\delta_f$ is increasing, it has at most countably many points of discontinuity \cite[p. 146]{Bartle1976book}. 

Assume now that $f$ is continuous and $X$ is compact. According to Definition  ~\bref{def:SemiContinuity}, for proving that $\delta_f$ is lower semicontinuous we need to show that for all $\alpha\in\R$ the level-set $L_{\alpha}:=\{\epsilon\in [0,\infty): \delta_f(\epsilon)\leq \alpha\}$ is closed. If  $\alpha<0$, then $L_{\alpha}=\emptyset$ and hence it is closed. Now assume to the contrary that $L_{\alpha}$ is not closed for some $\alpha\geq 0$. Then we can find a sequence $(\epsilon_n)_{n=1}^{\infty}$ of elements of $L_{\alpha}$ which converges to a nonnegative  number $\epsilon\notin L_{\alpha}$ (as a matter of fact, $\epsilon$  must be positive because $0\in L_{\alpha}$ by Proposition \bref{prop:MaximalDelta}\beqref{item:delta_f is increasing and finite}). Therefore  $\delta_f(\epsilon)>\alpha$ and hence we can choose some $q\in (\alpha,\delta_f(\epsilon))$. From \beqref{eq:delta_f} and the fact that $\delta_f(\epsilon_n)\leq\alpha<q$ for all $n\in\N$, there exists, for each $n\in\N$, a pair $(x_n,y_n)\in A_{\epsilon_n}$ satisfying $d_X(x_n,y_n)<q$ and $d_Y(f(x_n),f(y_n))\geq \epsilon_n$. Since $X$ is compact, we can find a subsequence $(x_{n_{k}})_{k=1}^{\infty}$ of $(x_n)_{n=1}^{\infty}$ which converges to some $x\in X$ and a subsequence $(y_{n_{k_j}})_{j=1}^{\infty}$ of $(y_{n_k})_{k=1}^{\infty}$ which converges to some $y\in X$. 

We have  $\lim_{j\to\infty}d_Y(f(x_{n_{k_j}}),f(y_{n_{k_j}}))=d_Y(f(x),f(y))$ since both $f$ and $d_Y$ are continuous. On the other hand, from the inequality $d_Y(f(x_n),f(y_n))\geq \epsilon_n$ (which holds, in particular, for $n_{k_j}$ for each $j\in\N$) and the fact that $\lim_{n\to\infty}\epsilon_n=\epsilon$ it follows that $d_Y(f(x),f(y))\geq \epsilon$. Thus $(x,y)\in A_{\epsilon}$ and hence $\delta_f(\epsilon)\leq d_X(x,y)$. But we already know that $d_X(x_n,y_n)<q$ for each $n\in \N$. Consequently, by taking $n$ to be 
$n_{k_j}$, letting $j\to\infty$, and using the  continuity of $d_X$, we have $d_X(x,y)\leq q$. Now we combine this inequality with the inequality $\delta_f(\epsilon)\leq d_X(x,y)$ and the choice $q\in (\alpha,\delta_f(\epsilon))$, and observe that we arrived at the impossible inequality 
$\delta_f(\epsilon)\leq d_X(x,y)\leq q<\delta_f(\epsilon)$. 
This contradiction shows that $L_{\alpha}$ is closed and $\delta_f$ is lower  semicontinuous, as required. 
\end{proof}

\section*{Appendix}
In this appendix we present the full analysis related to Example \bref{ex:delta_f} and Example \bref{ex:delta_f Discontinuous}.
\begin{proof}[{\bf Full analysis of Example \bref{ex:delta_f}}]
Let $\epsilon\geq 0$ be fixed. Following Theorem \bref{thm:UniCont}, in order to compute $\delta_f(\epsilon)$ it is useful to investigate the function $d_X$ on $A_{\epsilon}:=\{(x,y)\in X^2: |f(x)-f(y)|\geq \epsilon\}$ and to find its minimizers there (if there are any). Proposition \bref{prop:MaximalDelta}\beqref{item:delta_f is increasing and finite} ensures that $\delta_f(0)=0$. Assume first that $b<\infty$. It must be that for $\epsilon>b^{\alpha}$ one has $A_{\epsilon}=\emptyset$,  because if $(x,y)\in A_{\epsilon}$, then, in particular, $x\in [0,b]$ and $y\in [0,b]$ and hence $|f(x)-f(y)|=|x^{\alpha}-y^{\alpha}|\leq b^{\alpha}-0<\epsilon$, a contradiction.  Hence $\delta_f(\epsilon)=\infty$ whenever  $\epsilon>b^{\alpha}$. 

From now on (as long as $b<\infty$) we assume that $\epsilon\in (0,b^{\alpha}]$. The set  of minimizers of $d_X$ on $A_{\epsilon}$ coincides with the set of minimizers (on $A_{\epsilon}$) of the function $h:\R^2\to\R$ defined by $h(x,y):=(x-y)^2$ for each $(x,y)\in \R^2$. Since $h$ is smooth, if a minimum of it is attained at a point $(x,y)$ in the interior of  $A_{\epsilon}$, then Fermat's principle from basic calculus implies that $(0,0)=\nabla h(x,y)=(2(x-y),2(x-y))$, and therefore $x=y$, a contradiction to \beqref{eq:A_epsilon}. Thus any minimizer of $h$, and hence of $d_X$, must be located on the boundary of  $A_{\epsilon}$ (as a subset of $\R^2$).

Since $b<\infty$, the set $A_{\epsilon}$ is compact and hence the Extreme Value Theorem guarantees that $d_X$ has at least one  minimizer on it. Moreover, since $b<\infty$, it follows that $A_{\epsilon}$ is composed of two curved triangles (boundaries+interiors) which are symmetric relative to the diagonal $\{(x,y)\in X^2: y=x\}$. Denote these triangles by $T_{\textnormal{up}}$ and $T_{\textnormal{down}}$. The boundary of $T_{\textnormal{up}}$ can be written as  $\Gamma_{1,\textnormal{up}}\cup\Gamma_{2,\textnormal{up}}\cup\Gamma_{3,\textnormal{up}}$ and the boundary of  $T_{\textnormal{down}}$ can be written as  $\Gamma_{1,\textnormal{down}}\cup\Gamma_{2,\textnormal{down}}\cup\Gamma_{3,\textnormal{down}}$, where these sets are defined as follows: $\Gamma_{1,\textnormal{up}}:=\{(x,y)\in X^2: x=0, y\in [\epsilon^{1/\alpha},b]\}$, $\Gamma_{2,\textnormal{up}}:=\{(x,y)\in X^2: x\in [0,(b^{\alpha}-\epsilon)^{1/\alpha}], y=b\}$, $\Gamma_{3,\textnormal{up}}:=\{(x,y)\in X^2: x\in [0,(b^{\alpha}-\epsilon)^{1/\alpha}], y=(x^{\alpha}+\epsilon)^{1/\alpha}\}$  and similarly with $T_{\textnormal{down}}$: see Figures \bref{fig:A_epsilon_alpha_is_3}--\bref{fig:A_epsilon_alpha_is_third}.

%%%%%%%%%%%%%%%%%%%%%%%%%%%%%%%%%%%%%%%%%%%%%%%%%%%%%%%%%%%%%%%%%%%%
\begin{figure}[t]
\begin{minipage}[t]{0.49\textwidth}
\begin{center}
{\includegraphics[clip, scale=0.42]{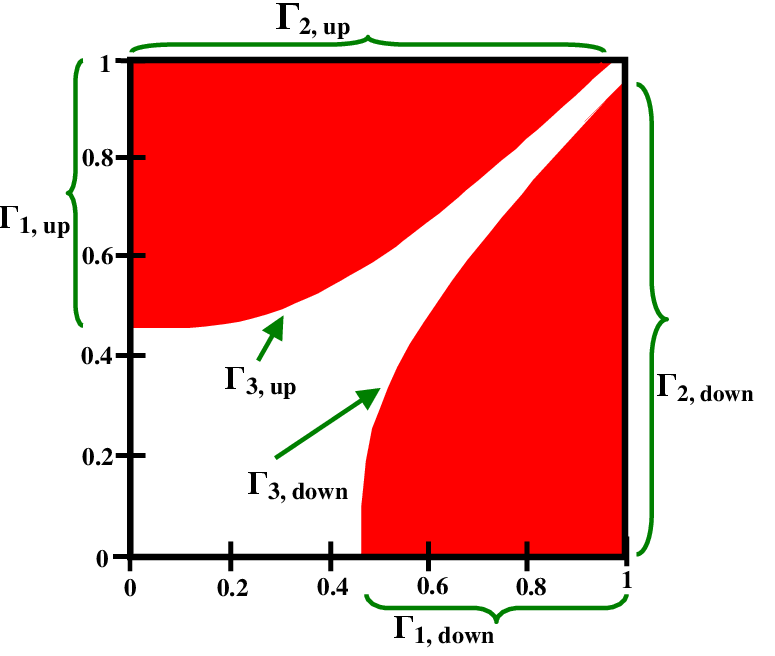}}%{alg40.png}}
\end{center}
 \caption{$A_{\epsilon}$ of Example \bref{ex:delta_f} when $\alpha=3$, $\epsilon=0.1$, $b=1$.}
\label{fig:A_epsilon_alpha_is_3}
\end{minipage}
\hfill
\begin{minipage}[t]{0.495\textwidth}
\begin{center}
{\includegraphics[scale=0.42]{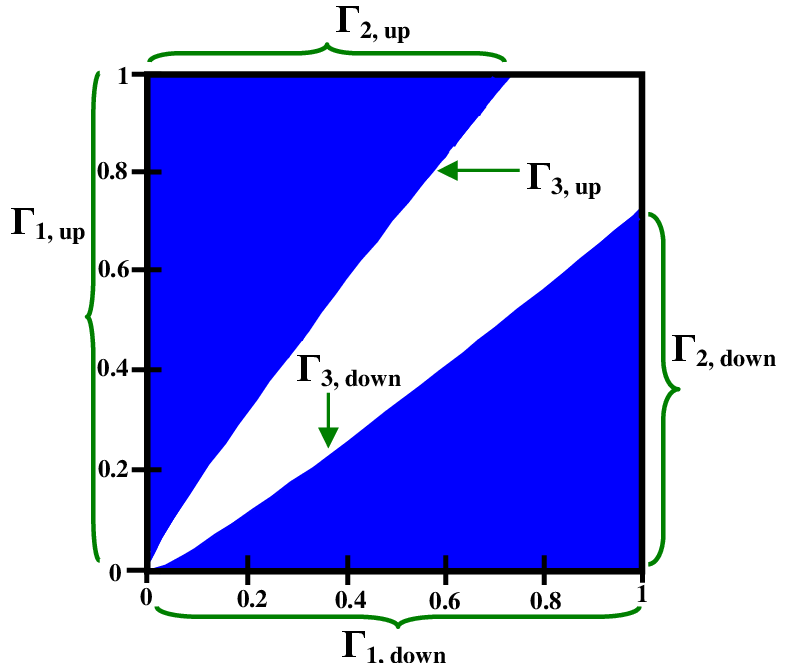}}
\end{center}
 \caption{$A_{\epsilon}$ of Example \bref{ex:delta_f} when $\alpha=1/3$, $\epsilon=0.1$, $b=1$.}
\label{fig:A_epsilon_alpha_is_third}
\end{minipage}
\end{figure}
%%%%%%%%%%%%%%%%%%%%%%%%%%%%%%%%%%%%%%%%%%%%%%%%%%%%%%%%%%%%%%%%%%%%

From elementary calculus it follows that the restriction of $d_X$ to each of these curves can be written as a  one-dimensional monotone function (increasing or decreasing: depending whether $\alpha\geq 1$ or $\alpha\leq 1$) and its minimal values are attained at the corners $(0,\epsilon^{1/\alpha})$, $((b^{\alpha}-\epsilon)^{1/\alpha},b)$, $(\epsilon^{1/\alpha},0)$, $(b,(b^{\alpha}-\epsilon)^{1/\alpha})$. Hence these minimal values are either $|0-\epsilon^{1/\alpha}|$ or $|(b^{\alpha}-\epsilon)^{1/\alpha}-b|$, namely  $\epsilon^{1/\alpha}$ or $b-(b^{\alpha}-\epsilon)^{1/\alpha}$. To see which of these values is smaller, as a function of $\epsilon$ (where $\epsilon\in (0,b^{\alpha}]$), consider the function $g(\epsilon):=b-(b^{\alpha}-\epsilon)^{1/\alpha}-\epsilon^{1/\alpha}$ on the interval $(0,b^{\alpha}]$. Elementary calculus shows that $g$ is nonnegative on this interval when $\alpha\in (0,1]$, and nonpositive there when $\alpha\in [1,\infty)$. It follows that the minimal value of $d_X$ on $A_{\epsilon}$ is $\epsilon^{1/\alpha}$ if $\alpha\in (0,1]$  and it is $b-(b^{\alpha}-\epsilon)^{1/\alpha}$ when $\alpha\in [1,\infty)$. Thus Theorem \bref{thm:UniCont} implies that $\delta_f(\epsilon)=\epsilon^{1/\alpha}$ if $\alpha\in (0,1]$ and  $\delta_f(\epsilon)=b-(b^{\alpha}-\epsilon)^{1/\alpha}$ if $\alpha\in [1,\infty)$. 

Finally, we need to consider the case where $b=\infty$. If $\alpha\in (0,1]$, then it must be that $\delta_f(\epsilon)=\epsilon^{1/\alpha}$ for each $\epsilon\in (0,\infty)$. Indeed, fix  $\epsilon\in (0,\infty)$. Since $(0,\epsilon^{1/\alpha})\in A_{\epsilon}$, it follows from \beqref{eq:delta_f} that $0\leq \delta_f(\epsilon)\leq d_X(0,\epsilon^{1/\alpha})=\epsilon^{1/\alpha}$. If, to the contrary, $\delta_f(\epsilon)<\epsilon^{1/\alpha}$, then from \beqref{eq:delta_f} there exists a point $(x_1,y_1)\in A_{\epsilon}$ such that $d_X(x_1,y_1)<\epsilon^{1/\alpha}$. Let $\tilde{b}>\max\{x_1,y_1\}$. Then  $(x_1,y_1)\in [0,\tilde{b}]^2\cap A_{\epsilon}$. Consider the restriction of $A_{\epsilon}$ to the square $[0,\tilde{b}]^2$. As explained in the previous paragraphs (where now $\tilde{b}$ replaces $b$), the minimal value of $d_X$ on $A_{\epsilon}$ is $\epsilon^{1/\alpha}$ when $\alpha \in (0,1]$. Since we assume that $d_X(x_1,y_1)<\epsilon^{1/\alpha}$, we see that the value of $d_X$ at the point $(x_1,y_1)$, which belongs to $[0,\tilde{b}]^2\cap A_{\epsilon}$, is smaller than the minimal value of $d_X$ on $[0,\tilde{b}]^2\cap A_{\epsilon}$. This is a contradiction which implies the assertion. 

It remains to consider the case where $\alpha\in (1,\infty)$. We claim that in this case $\delta_f(\epsilon)=0$ for each $\epsilon\in (0,\infty)$. Indeed, fix arbitrary $\epsilon\in (0,\infty)$ and $r\in (0,\infty)$.  By using l'H\^opital's rule and the assumption that $\alpha\in (1,\infty)$ one can show that $\lim_{b\to\infty}(b-(b^{\alpha}-\epsilon)^{1/\alpha})=0$. Hence $b-(b^{\alpha}-\epsilon)^{1/\alpha}<r$ for all $b$ sufficiently large. Let $b$ be sufficiently large. Since for $(x,y):=((b^{\alpha}-\epsilon)^{1/\alpha},b)$ we have $(x,y)\in A_{\epsilon}$ and $d_X(x,y)=b-(b^{\alpha}-\epsilon)^{1/\alpha}$, it follows from \beqref{eq:delta_f} and the choice of $b$ that $\delta_f(\epsilon)\leq d_X(x,y)<r$. Because $r$ can be arbitrary small we conclude that $\delta_f(\epsilon)=0$, as claimed. 
\end{proof}

\begin{proof}[{\bf Full Analysis of Example \bref{ex:delta_f Discontinuous}}]
See Figure \bref{fig:chainsaw}. We claim that $\delta_f$ is discontinuous at each of the infinitely many points $1/n\in X:=[0,1]$, $2\leq n\in \N$. Indeed, fix a natural number $n\geq 2$ and let $\epsilon\in (0,1]$ satisfy $1/n<\epsilon$. Let $y:=2/(2n+1)$  and $x:=1/n$. Since $|f(y)-f(x)|=1/n$, we have $(x,y)\in A_{1/n}$.  Hence from \beqref{eq:delta_f}  it follows that 
\begin{equation}\label{eq:delta_f(1/n)} 
\delta_f\left(\frac{1}{n}\right)\leq d_X(x,y)=|y-x|=\frac{1}{(2n+1)n}. 
\end{equation}
On the other hand, we will see below that $\delta_f(\epsilon)>1/(n(2n-1))$. Since $1/(n(2n-1))>1/((2n+1)n)$,  it is  not possible to bridge the gap between $\delta_f(1/n)$ and $\delta_f(\epsilon)$ no matter how  close $\epsilon$ is to $1/n$. Therefore $\delta_f$ is discontinuous at the point $1/n\in X$. 

Indeed, consider $A_{\epsilon}$ from \beqref{eq:A_epsilon} and let $(x_0,y_0)\in A_{\epsilon}$ be a minimizer of $d_X$ on $A_{\epsilon}$ whose existence is guaranteed by Theorem \bref{thm:UniCont} (since $f$ is continuous and  $X$ is compact). Assume first that $x_0<y_0$; the case $y_0>x_0$ can be handled similarly, and the case $x_0=y_0$ is impossible due to \beqref{eq:A_epsilon}. It must be that $y_0>2/(2n-1)$. Indeed, if, to the contrary, we have $y_0\leq 2/(2n-1)$, then both $x_0$ and $y_0$ are located in the interval $[0,2/(2n-1)]$. But on this interval $f$ is bounded from above by $1/n$. Since $f$ is bounded from below by 0 (everywhere), it follows that $|f(x_0)-f(y_0)|\leq 1/n<\epsilon$, a contradiction to the assumption $(x_0,y_0)\in A_{\epsilon}$. Now let $k_0\in\N$ be the minimal $k\in\N$ such that $1/k<y_0$. Since $y_0\leq 1$, it follows that $k_0>1$. Since $k_0$ is the minimal natural number $k\in\N$ which satisfies $1/k<y_0$, it follows that $1/(k_0-1)\geq y_0$. But $y_0>2/(2n-1)>1/n$, and so $1/(k_0-1)>1/n$. Hence $n>k_0-1$. Since $n\in\N$, we have 
\begin{equation}\label{n>=k_0}
n\geq k_0.
\end{equation}

%%%%%%%%%%%%%%%%%%%%%%%%%%%%%%%%%%%%%%%%%%%%%%%%%%%%%%%%%%%%%%%%%%%%
\begin{figure}[t]
\begin{minipage}[t]{1\textwidth}
\begin{center}
{\includegraphics[clip, scale=0.5]{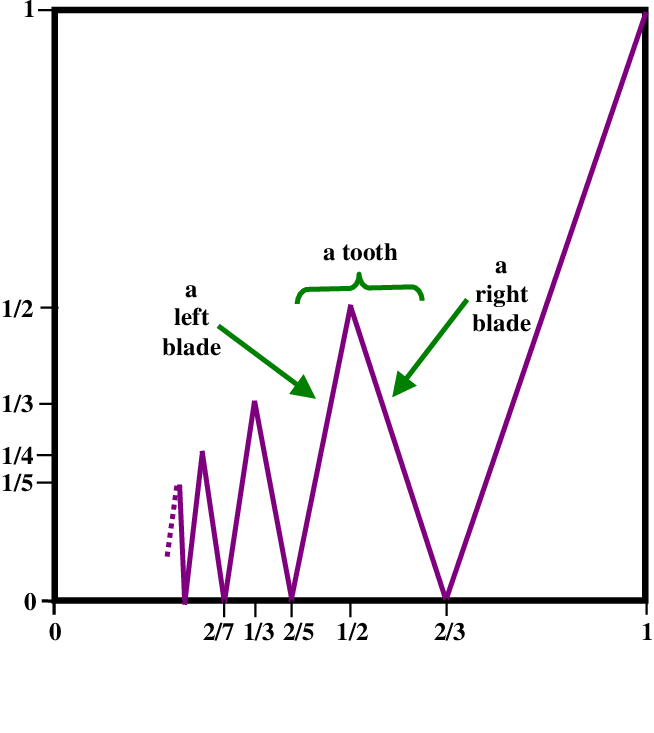}}%{alg40.png}}
\end{center}
 \caption{The graph of the decreasing chainsaw function $f$ from \beqref{eq:Chainsaw}.}
\label{fig:chainsaw}
\end{minipage}
\end{figure}
%%%%%%%%%%%%%%%%%%%%%%%%%%%%%%%%%%%%%%%%%%%%%%%%%%%%%%%%%%%%%%%%%%%%
 
  The rest of the analysis is done by considering several cases which can be treated in a similar manner and hence we will consider only a few of them. First, we observe (Figure \bref{fig:chainsaw}) that the graph of $f$ is composed of ``chainsaw teeth'', where each tooth is composed of a ``left blade'' and (with the exception of the right-most tooth which contains the number 1) a ``right blade'': the apex of  tooth  number $k\in\N$ is the point $(1/k,1/k)$, the left blade is the line connecting the point  $(2/(2k+1),0)\in \R^2$ with this apex, and the right blade is the line segment connecting the apex with the point $(2/(2k-1),0)$.

  Second, it must be that $(x_0,f(x_0))$ and $(y_0,f(y_0))$ are located on the same blade. To see that  this claim holds, suppose to the contrary that these points are on different blades. We claim that in this case there is a point $z_0\in [0,1]$ (actually many points) satisfying both $x_0<z_0<y_0$ and either $|f(y_0)-f(z_0)|\geq |f(y_0)-f(x_0)|$ or $|f(x_0)-f(z_0)|\geq |f(y_0)-f(x_0)|$. Once this claim is proved (done in the next paragraph), we obtain a contradiction  to the assumption that $(x_0,y_0)$ is a minimizer of $d_X$ on $A_{\epsilon}$ because in the first case $|f(y_0)-f(z_0)|\geq |f(y_0)-f(x_0)|\geq \epsilon$ (since $(x_0,y_0)\in A_{\epsilon}$) and $d_X(y_0,z_0)=y_0-z_0<y_0-x_0=d_X(x_0,y_0)$, and in the second case $|f(x_0)-f(z_0)|\geq |f(y_0)-f(x_0)|\geq \epsilon$ and $d_X(x_0,z_0)=z_0-x_0<y_0-x_0=d_X(x_0,y_0)$. 
	
	Now we prove that there exists such a point $z_0$ with the required properties. Since it is assumed that $x_0$ and $y_0$ are on different blades, since $1/k_0<y_0\leq 1/(k_0-1)$ and since $x_0<y_0$, it follows from \beqref{eq:Chainsaw} that there can be two cases: either $(y_0,f(y_0))$ is on the blade with ``base'' $[1/k_0,2/(2k_0-1)]$ and $(x_0,f(x_0))$ is on a blade located to the left of this blade, and thus $x_0<1/k_0<y_0$, or $(y_0,f(y_0))$ is on the blade with base  $[2/(2k_0-1),1/(k_0-1)]$ and $(x_0,f(x_0))$ is on a blade located to the left of this blade, and so $x_0<2/(2k_0-1)\leq y_0$. In the first case we can take $z_0:=1/k_0$. Indeed, $x_0<z_0<y_0$; in addition, since $f$ attains its maximal value on $[0,2/(2k_0-1)]$ at the point $z_0$, if $f(x_0)\geq f(y_0)$, then $|f(y_0)-f(z_0)|=f(z_0)-f(y_0)\geq f(x_0)-f(y_0)=|f(y_0)-f(x_0)|$, and if $f(x_0)<f(y_0)$, then $|f(x_0)-f(z_0)|=f(z_0)-f(x_0)\geq f(y_0)-f(x_0)=|f(y_0)-f(x_0)|$. In the second case, if $y_0>2/(2k_0-1)$, then  we can take  $z_0:=2/(2k_0-1)$ since in this case $x_0<z_0<y_0$ and $f(z_0)=0$, and either we have $f(y_0)\geq f(x_0)$, and then $|f(y_0)-f(z_0)|=f(y_0)\geq f(y_0)-f(x_0)=|f(y_0)-f(x_0)|$, or we have $f(x_0)\geq f(y_0)$ and then $|f(x_0)-f(z_0)|=f(x_0)\geq f(x_0)-f(y_0)=|f(y_0)-f(x_0)|$. Finally, if $y_0=2/(2k_0-1)$, then $x_0$ must be smaller than $1/k_0$ (otherwise both $(x_0,f(x_0))$ and $(y_0,f(y_0))$ are located on the blade with base $[1/k_0,2/(2k_0-1)]$), and therefore this case reduces to the  first case in which we take $z_0:=1/k_0$. 
  
  So the assumption that $(x_0,f(x_0))$ and $(y_0,f(y_0))$ are located on  different blades leads to the existence of the above mentioned point $z_0$, which by itself leads to a contradiction. Hence this proves that $(x_0,f(x_0))$ and $(y_0,f(y_0))$ are located on the same blade. But then either this is the right blade of tooth number $k_0$, so \beqref{eq:Chainsaw} implies that $|f(x_0)-f(y_0)|=(2k_0-1)(y_0-x_0)=(2k_0-1)d_X(x_0,y_0)$, or this is the left blade of tooth number $k_0-1$ and then again  \beqref{eq:Chainsaw} implies that $|f(x_0)-f(y_0)|=(2k_0-1)(y_0-x_0)=(2k_0-1)d_X(x_0,y_0)$. Since $(x_0,y_0)\in A_{\epsilon}$, we  know that $|f(x_0)-f(y_0)|\geq \epsilon$. Now we combine this inequality with the previous lines, with \beqref{eq:delta_f(1/n)}, with the  assumption that $1/n<\epsilon$, and with \beqref{n>=k_0}, and obtain the desired conclusion:
\begin{multline*}
\delta_f(\epsilon)=d_X(x_0,y_0)=\frac{|f(x_0)-f(y_0)|}{2k_0-1}\geq \frac{\epsilon}{2k_0-1}
>\frac{1}{n(2n-1)}>\frac{1}{(2n+1)n}\geq \delta_f\left(\frac{1}{n}\right).
\end{multline*}
\end{proof}

{\noindent }\textbf{Acknowledgments: So Long, and Thanks for All the Involved Entities}\vspace{0.1cm}\\ 
This article is the result of a long and challenging trek which started in 2007. Although most of  the work on the article has been done in years  in which I have been associated with The  Technion, Haifa, Israel (2007-2010, 2016, 2018--2019), other stations in space and time have  benefited me regarding the article: the University of Haifa, Haifa, Israel (2010), the National Institute of Pure and Applied Mathematics (IMPA), Rio de Janeiro, Brazil (2012), and the Institute of Mathematical and Computer Sciences (ICMC), University of S\~ao Paulo, S\~ao Carlos, Brazil (2015). In addition, I would like to use this opportunity to thank several people, especially  Gregory Shapiro for a useful discussion regarding Example \bref{ex:delta_f},  Zbigniew H. Nitecki for a useful discussion regarding \cite{Nitecki2009book}, Jose M. Almira for a useful discussion regarding  \cite{AlmiraPassot2008jour}, and Jamanadas R. Patadia for useful remarks on the structure of the article. 

\bibliographystyle{acm}
\bibliography{biblio}
\end{document}